\documentclass{amsart}
\usepackage{bm}
\usepackage[
colorlinks=true,citecolor=blue,hypertexnames=false]{hyperref}
\usepackage{color}
\theoremstyle{plain}
       
\newtheorem{thm}{Theorem}     
\newtheorem{lem}{Lemma}     
     
\newcommand{\Vand}{\operatorname{V}}  
\newcommand{\Wronsk}{\operatorname{W}}  
\newcommand{\K}{\mathbb{K}}

\newcommand{\LM}{{\textsf{LM}}} 

\title{Wronskians and linear independence}
\author[A. Bostan]{Alin Bostan}
\address{Algorithms Project, Inria Paris-Rocquencourt, 78153 Le Chesnay, France}
\email{Alin.Bostan@inria.fr}   

\author[Ph. Dumas]{Philippe Dumas}
\address{Algorithms Project, Inria Paris-Rocquencourt, 78153 Le Chesnay, France}
\email{Philippe.Dumas@inria.fr}  

\begin{document}

\begin{abstract}
We give a new and simple proof of the fact that     
a finite family of analytic functions has a zero
Wronskian only if it is linearly dependent.         
\end{abstract}	

\maketitle
                       
The Wronskian of a finite family $f_1,\ldots, f_n$ of $(n-1)$-times differentiable functions is defined as the determinant $\Wronsk(f_1,\ldots,f_n)$ of the Wronskian matrix
\[
\begin{bmatrix}
	f_1 & \cdots & f_n  \\
	f_1' & \cdots & f_n'  \\
	\vdots & \vdots & \vdots \\
	f_1^{(n-1)} & \cdots & f_n^{(n-1)} 
\end{bmatrix}.		
\]
Obviously, a family of linearly dependent functions has a zero Wronskian.

Many standard textbooks on differential equations 
(e.g.,~\cite[Chap.~5, \S5.2]{Ince44}, 
\cite[Chap.~1, \S4]{Poole60}, 
\cite[Chap.~3, \S7]{Hurewicz58}) 
contain the following warning: 
\emph{linearly independent functions may have an identically zero Wronskian!\/}   
This seems to have been pointed out for the first time by Peano~\cite{Peano1889,Peano1889b}, who gave the example of the pair of functions $f_1(x) 
= x^2$ and $f_2(x) = x |x|$ defined on $\mathbb{R}$, which are linearly independent but whose Wronskian vanishes. 
Subsequently, B\^ocher~\cite{Bocher1900} showed that there even exist families of \emph{infinitely differentiable\/} real functions sharing the same property. However, it is known that under some regularity assumptions, the identical vanishing of the Wronskian \emph{does imply\/} linear dependence. 
The most important result in this direction is the following.
\begin{thm}\label{th:old}                                         
A finite family of linearly independent (real or complex) analytic functions has a nonzero Wronskian.  
\end{thm}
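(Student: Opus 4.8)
The plan is to prove the contrapositive: if a family of analytic functions $f_1,\dots,f_n$ on a connected domain $D$ has an identically zero Wronskian, then it is linearly dependent. I would argue by induction on $n$. The base case $n=1$ is immediate, since $\Wronsk(f_1)=f_1$, so a vanishing Wronskian forces $f_1\equiv 0$. For the inductive step I want to peel off the last function using the sub-Wronskian $W_{n-1}:=\Wronsk(f_1,\dots,f_{n-1})$. If $f_1,\dots,f_{n-1}$ are already linearly dependent we are done; otherwise the inductive hypothesis gives $W_{n-1}\not\equiv 0$, and since $W_{n-1}$ is a nonzero analytic function on the connected set $D$ its zeros are isolated, so there is a nonempty open connected $D'\subseteq D$ on which $W_{n-1}$ never vanishes.

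On $D'$ I would invoke the reduction-of-order identity
\[
\Wronsk(f_1,\dots,f_n)=W_{n-1}\cdot L[f_n],\qquad
L[y]:=\frac{\Wronsk(f_1,\dots,f_{n-1},y)}{W_{n-1}},
\]
where, expanding the numerator along its last column, $L$ is seen to be the monic order-$(n-1)$ linear differential operator whose coefficients are ratios of Wronskians (hence analytic on $D'$) and whose kernel is exactly $\operatorname{span}(f_1,\dots,f_{n-1})$. Since $\Wronsk(f_1,\dots,f_n)\equiv 0$ while $W_{n-1}\neq 0$ on $D'$, the identity yields $L[f_n]\equiv 0$ there, so $f_n$ solves the same order-$(n-1)$ equation as the independent functions $f_1,\dots,f_{n-1}$; as these span the whole $(n-1)$-dimensional solution space, there are constants $c_1,\dots,c_{n-1}$ with $f_n=\sum_{i<n}c_i f_i$ on $D'$.

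The decisive step is to upgrade this relation, valid only on the open piece $D'$, to all of $D$, and I expect this to be the one place that really matters. Here analyticity enters: the function $f_n-\sum_{i<n}c_if_i$ is analytic on the connected domain $D$ and vanishes on the open set $D'$, so by the identity theorem it vanishes throughout $D$, contradicting the assumed independence of $f_1,\dots,f_n$ and closing the induction. This globalization is the main obstacle to keep in view not because it is technically delicate but because it is the \emph{only} use of the hypothesis and cannot be removed: Peano's $x^2,x|x|$ and B\^ocher's $C^\infty$ examples are precisely the non-analytic functions for which the locally valid relation fails to propagate across a zero of $W_{n-1}$.

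As an alternative I would keep in reserve a more symmetric route that fits the Vandermonde theme: view $v=(f_1,\dots,f_n)$ as an analytic curve in $\K^n$, so that $\Wronsk(f_1,\dots,f_n)$ is, up to a nonzero constant, the confluent limit of the collocation determinant $\det[f_j(x_i)]$ divided by $\Vand(x_1,\dots,x_n)$, and linear independence means $v$ lies in no hyperplane. If $v,v',\dots,v^{(n-1)}$ were everywhere dependent, the span of the first maximal independent block $e_1\wedge\cdots\wedge e_k$ would satisfy a scalar linear ODE, so its Pl\"ucker line is constant and $v$ is trapped in a proper subspace; the same identity-theorem step then concludes. I would nonetheless write the induction first, as the safer and more elementary argument.
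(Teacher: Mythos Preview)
Your inductive argument is correct and is essentially the classical B\^ocher/Frobenius proof that the paper explicitly cites and then sets out to replace. The paper's route is genuinely different: rather than inducting on $n$ and invoking the solution-space dimension of a linear ODE plus the identity theorem, it works entirely at the level of formal power series in $\K[[x]]$ over any field of characteristic zero. First it applies Gaussian elimination (an invertible $\K$-linear change of the family) to arrange that the $g_i$ have pairwise distinct orders; this only multiplies the Wronskian by a nonzero scalar. Then it observes that the lowest-order term of $\Wronsk(g_1,\dots,g_n)$ equals the Wronskian of the leading monomials $a_ix^{d_i}$, which by a direct computation is $\Vand(d_1,\dots,d_n)\cdot a_1\cdots a_n\cdot x^{\sum d_i-\binom{n}{2}}\neq 0$.

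What each approach buys: your induction is short and uses only standard ODE facts, but it relies on analytic continuation to globalize the local relation across zeros of $W_{n-1}$, so it is tied to analytic (or at least real-analytic) functions on a connected domain. The paper's argument needs no identity theorem and no ODE existence/uniqueness; it is purely algebraic, gives the leading term of the Wronskian explicitly, and---the paper's main selling point---extends with the same idea to generalized Wronskians of multivariate power series, where no obvious inductive peeling is available. Your ``alternative'' paragraph gestures toward a Vandermonde/collocation viewpoint, but the paper's actual mechanism is simpler than Pl\"ucker coordinates: it is just the leading-monomial computation reducing to a numerical Vandermonde in the exponents.
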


Although this property is classical, the only direct proof that we have been able to find in the literature is that of B\^ocher~\cite[pp.~91--92]{Bocher1900b}.
It proceeds by induction on the number of functions, and thus it is not very ``transparent".  

In most references, Theorem~\ref{th:old} is usually presented as a consequence of the more general fact that
\emph{if the Wronskian of a family of real functions is zero on an interval, then there exists a subinterval on which the family is linearly dependent}. The latter result is also proved by induction, in one of the following ways: either directly using a recursive property of the Wronskian\footnote{The idea of this proof goes back to~\cite[\S1]{Frobenius1874}; quite paradoxically, Frobenius failed to add the word ``subinterval" in his original paper, and this lapse was at the origin of Peano's warnings.} (see, e.g.,~\cite[Theorem~3]{Krusemeyer88}) or indirectly, making use of B\^ocher's criterion~\cite[Theorem~II]{Bocher01};  
see also~\cite[Chap.~3, \S7]{Hurewicz58} for a simplified proof. 
              
\smallskip
In the nonanalytic case, B\^ocher~\cite{Bocher01} 
and Curtiss~\cite{Curtiss1908} (among others)
have given various
sufficient conditions to guarantee results similar to Theorem~\ref{th:old}; some of them are 
recalled in~\cite{Krusemeyer88}. 
In the analytic case, the property is purely formal; as a consequence, we use formal power series instead of functions, and
we give a new, simple proof of the following extension of Theorem~\ref{th:old}. 

\begin{thm}\label{th:main2}                                         
  Let $\K$ be a field of characteristic zero.       
  A finite family of formal power series in $\K[[x]]$, or rational functions in $\K(x)$,  
has a zero Wronskian only if it is linearly dependent over $\K$.
\end{thm}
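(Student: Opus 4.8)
The plan is to prove the contrapositive: a linearly independent family has a nonzero Wronskian. Since $\K(x)$ and $\K[[x]]$ both embed $\K$-linearly into the field of formal Laurent series $\K((x))$, and since both the Wronskian and $\K$-linear (in)dependence are preserved by this embedding, it suffices to treat a linearly independent family $f_1,\ldots,f_n\in\K((x))$, where every nonzero element has a well-defined integer valuation (order) $v(\cdot)$.

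First I would replace $f_1,\ldots,f_n$ by a more convenient basis of $V:=\sum_i \K f_i$. A leading-coefficient (``echelon'') reduction does the job: for each $m\in\bZ$ the quotient of $\{f\in V: v(f)\ge m\}$ by $\{f\in V: v(f)\ge m+1\}$ injects into $\K$ via the coefficient of $x^m$, hence is at most one-dimensional, and summing these dimensions yields a basis $g_1,\ldots,g_n$ of $V$ whose valuations $\omega_1<\omega_2<\cdots<\omega_n$ are pairwise distinct. Because passing from $(f_i)$ to $(g_i)$ multiplies the Wronskian matrix on the right by an invertible matrix $M$ over $\K$, one has $\Wronsk(f_1,\ldots,f_n)=\det(M)\,\Wronsk(g_1,\ldots,g_n)$ with $\det M\neq 0$; so it is enough to show $\Wronsk(g_1,\ldots,g_n)\neq 0$.

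The key computation is the lowest-order term of $\Wronsk(g_1,\ldots,g_n)$. Writing $g_i=c_i x^{\omega_i}+(\text{higher order})$ with $c_i\neq 0$, and using the estimate $v(u^{(j)})\ge v(u)-j$ (valid in characteristic zero, including the borderline case $v(u)=0$), every term $\prod_{j=0}^{n-1} g_{\pi(j)}^{(j)}$ in the determinant expansion has valuation at least $\Omega:=\sum_i\omega_i-\binom{n}{2}$. By multilinearity of the Wronskian in its arguments, the coefficient of $x^\Omega$ comes solely from the leading monomials, i.e.\ from $\Wronsk(c_1 x^{\omega_1},\ldots,c_n x^{\omega_n})$; every contribution involving at least one higher-order tail has strictly larger valuation. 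Since $\frac{d^j}{dx^j}x^{\omega_i}=\omega_i^{\underline{j}}\,x^{\omega_i-j}$, factoring $x^{\omega_i}$ out of each column and $x^{-j}$ out of each row identifies this leading Wronskian, up to the nonzero scalar $x^\Omega\prod_i c_i$, with the determinant $\det\!\big(\omega_i^{\underline{j}}\big)_{0\le j\le n-1,\,1\le i\le n}$ of falling factorials.

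Finally, the main point is that this last determinant is nonzero. Since the falling factorials $1,\omega,\omega(\omega-1),\ldots$ are related to the ordinary powers $1,\omega,\omega^2,\ldots$ by a unitriangular change of columns, one gets $\det(\omega_i^{\underline{j}})=\det(\omega_i^{\,j})=\prod_{1\le i<i'\le n}(\omega_{i'}-\omega_i)$, the Vandermonde determinant in the orders $\omega_i$. Here both hypotheses enter decisively: the $\omega_i$ are pairwise distinct by construction, and $\K$ has characteristic zero, so no factor $\omega_{i'}-\omega_i\in\bZ\subset\K$ vanishes. Hence the coefficient of $x^\Omega$ in $\Wronsk(g_1,\ldots,g_n)$ equals $\prod_i c_i\cdot\prod_{i<i'}(\omega_{i'}-\omega_i)\neq 0$, and the Wronskian is nonzero. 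I expect the delicate part to be the initial reduction to a basis with distinct orders together with the valuation bookkeeping guaranteeing that the higher-order tails cannot cancel the leading term; once a strictly increasing sequence of valuations is in hand, the Vandermonde structure makes the nonvanishing transparent.
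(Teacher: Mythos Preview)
Your argument is correct and follows essentially the same route as the paper: reduce by Gaussian elimination to a basis with pairwise distinct orders (the paper's Lemma~\ref{lem:FtoG}), then show that the leading term of the Wronskian equals the Wronskian of the leading monomials, which factors as a nonzero monomial times the Vandermonde determinant in the orders via the falling-factorial-to-power change of basis (the paper's Lemmas~\ref{lem:WofMonomials} and~\ref{lem:LMofW}). Your choice to work uniformly in $\K((x))$ from the outset is exactly one of the two options the paper mentions for handling the rational-function case.
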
      
Theorem~\ref{th:main2} is used for instance by Newman and Slater in their study~\cite{NeSl79} of Waring's problem for the ring of polynomials.
Note that the assumption on the characteristic is important; if $p$ is a prime
number, the polynomials $1$ and $x^p$ are linearly independent over
$\K=\mathbb{Z}/p\mathbb{Z}$, but have zero Wronskian. Nevertheless, a variant
of Theorem~\ref{th:main2} still holds~\cite[Theorem~3.7]{Kaplansky76} provided \emph{linear dependence over~$\K$\/} is
replaced by \emph{linear dependence over the ring of constants $\K[[x^p]]$\/} (or \emph{over the
field of constants $\K(x^p)$\/} in the statement for rational functions).

\smallskip
Theorem~\ref{th:main2} is proved for polynomials in~\cite[Theorem~4.7(a)]{LeVeque1956}.
It is a particular case of~\cite[Theorem~3.7]{Kaplansky76}; see
also~\cite[Prop.~2.8]{Magid94}. As expected, the proofs
in~\cite{LeVeque1956,Kaplansky76,Magid94} are slight variations of B\^ocher's inductive
proof mentioned above. We now present a different proof, which is direct and effective, of  Theorem~\ref{th:main2}. Our proof also has the advantage that it
generalizes to the multivariate case, as we show  
below.

\subsection*{Wronskians of monomials}
The key to our proof is the following classical result, which relates the Wronskian of a family of monomials $x^{d_1}, \ldots, x^{d_n}$ to the Vandermonde determinant 
\[  
\Vand(d_1,\ldots,d_n) 
=
\begin{vmatrix}
	{1} & \cdots & {1} \\
	{d_1} & \cdots & {d_n} \\
	\vdots & \vdots & \vdots \\
	{d_1}^{n-1} & \cdots & {d_n}^{n-1} \\
\end{vmatrix}		
=
\prod_{1\leq i < j \leq n} \left( d_j - d_i\right)
\]
associated to their exponents;  
see, e.g.,~\cite[Example 2.3]{Magid94} and~\cite[Theorem~24]{Kiselev07}.
\begin{lem}  \label{lem:WofMonomials} 
The Wronskian 
of the monomials $a_1 x^{d_1}, \ldots, a_n x^{d_n}$ 
is equal to
\[ 
\Vand(d_1,\ldots,d_n)  \, x^{d_1 + \cdots + d_n - {n \choose 2}}
\, \prod_{i=1}^n a_i.                              
\]                                

\end{lem}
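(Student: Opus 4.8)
The plan is to compute the determinant $\Wronsk(a_1 x^{d_1}, \ldots, a_n x^{d_n})$ directly by factoring monomials out of the rows and columns of the Wronskian matrix, and to recognize what remains as a Vandermonde determinant. First I would observe that each column of the Wronskian matrix depends on a single monomial $a_i x^{d_i}$, so the $i$-th column can be treated independently. The entry in row $k$ (with $k$ running from $0$ to $n-1$, corresponding to the $k$-th derivative) and column $i$ is
\[
\frac{d^k}{dx^k}\bigl(a_i x^{d_i}\bigr) = a_i\, d_i (d_i-1)\cdots(d_i-k+1)\, x^{d_i-k},
\]
that is, $a_i$ times the falling factorial $d_i^{\underline{k}} = d_i(d_i-1)\cdots(d_i-k+1)$ times $x^{d_i-k}$.

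The key step is to separate out the parts that factor cleanly from the determinant. From column $i$ I would pull out the common scalar $a_i$ and the common power $x^{d_i}$ (the lowest power of $x$ appearing in that column is $x^{d_i - (n-1)}$, but it is cleaner to factor $x^{d_i}$ and absorb the rest into the rows); from row $k$ I would then pull out the single power $x^{-k}$. After these extractions the overall prefactor is
\[
\Bigl(\prod_{i=1}^n a_i\Bigr)\Bigl(\prod_{i=1}^n x^{d_i}\Bigr)\Bigl(\prod_{k=0}^{n-1} x^{-k}\Bigr)
= \Bigl(\prod_{i=1}^n a_i\Bigr)\, x^{d_1+\cdots+d_n}\, x^{-\binom{n}{2}},
\]
since $\sum_{k=0}^{n-1} k = \binom{n}{2}$, which already matches the claimed power of $x$ and the product of the $a_i$. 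What remains inside the determinant is the matrix whose $(k,i)$ entry is the falling factorial $d_i^{\underline{k}}$; call this determinant $\det\bigl(d_i^{\underline{k}}\bigr)_{0 \le k \le n-1,\ 1 \le i \le n}$.

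It then remains to show that $\det\bigl(d_i^{\underline{k}}\bigr) = \Vand(d_1,\ldots,d_n)$. The point is that $d^{\underline{k}}$ is a monic polynomial in $d$ of degree exactly $k$, so the falling-factorial matrix is obtained from the monomial matrix with entries $d_i^{\,k}$ (whose determinant is $\Vand(d_1,\ldots,d_n)$) by a sequence of elementary row operations that subtract $\K$-linear combinations of lower-degree rows from each row. Such operations do not change the determinant, so the two determinants are equal. I expect this last identification to be the only place requiring a small argument; everything before it is bookkeeping of the factored scalars and powers of $x$. The main obstacle, if any, is simply keeping the indexing of rows and the falling-factorial degrees straight, so that the triangular change of basis between $\{d^{\underline{k}}\}$ and $\{d^{\,k}\}$ is visibly unimodular and the Vandermonde value is recovered unchanged.
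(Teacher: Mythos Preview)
Your argument is correct and follows essentially the same route as the paper: factor out the scalars $a_i$ and the monomial $x^{d_1+\cdots+d_n-\binom{n}{2}}$ from the Wronskian matrix, leaving the falling-factorial matrix $\bigl((d_j)_{k}\bigr)$, and then observe that a triangular change of basis (since each $(d)_k$ is monic of degree $k$ in $d$) converts it to the Vandermonde matrix without changing the determinant. The only cosmetic difference is that you describe the factoring in two stages (columns then rows) and name the elementary operations as row operations, whereas the paper does the factoring in one step and calls them column operations; the content is identical.
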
	                                                       
\begin{proof}  By definition, the Wronskian $\Wronsk(a_1 x^{d_1}, \ldots, a_n x^{d_n})$ is equal to the determinant of the matrix
\[
\begin{bmatrix}
	a_1 x^{d_1} & \cdots & a_n x^{d_n} \\
	a_1 d_1 x^{d_1-1} & \cdots & a_n d_n x^{d_n-1} \\
	\vdots & \vdots & \vdots \\
	a_1 (d_1)_{n-1} x^{d_1-(n-1)} & \cdots & a_n (d_n)_{n-1} x^{d_n-(n-1)}
\end{bmatrix},		
\]
where 
$(d)_k$ denotes the falling factorial $d(d-1)\cdots (d-k+1).$   

This determinant is equal to the product of the monomial $a_1 \cdots a_n \cdot x^{d_1 + \cdots + d_n - {n \choose 2}}$ and the determinant of the matrix
\[
D
=
\begin{bmatrix}
	1 & \cdots & 1 \\
	d_1  & \cdots & d_n  \\
	(d_1)_2  & \cdots & (d_n)_2  \\
	\vdots & \vdots & \vdots \\
	(d_1)_{n-1}  & \cdots & (d_n)_{n-1}
\end{bmatrix}.		
\]
Since $(d)_k$ is a monic polynomial of degree $k$ in $d$, 
we can use elementary column operations (which preserve the 
determinant) to transform the matrix~$D$ into the Vandermonde matrix
associated to $d_1,\ldots,d_n$. 
The result follows.             
\end{proof}	

\subsection*{{}Reduction to power series with distinct orders}
The next result relates the Wronskian of a linearly independent family of power series and the Wronskian of a family of power series having mutually distinct orders. Recall that the order of a nonzero power series is the smallest exponent with nonzero coefficient in that series.

\begin{lem}\label{lem:FtoG}
Let $\K$ be a field and let $f_1,\ldots,f_n$ be a family of power series in $\K[[x]]$ which are linearly independent over $\K$. There exists an invertible $n \times n$ matrix $A$ with entries in $\K$ such that the power series $g_1,\ldots, g_n$ defined by
\begin{equation} \label{eq:FtoG}
\begin{bmatrix} g_1  & \cdots & g_n \end{bmatrix}
= 	 
\begin{bmatrix} f_1  & \cdots & f_n \end{bmatrix} \cdot A
\end{equation}	      
are all nonzero and have mutually distinct orders. As a consequence, the following equality holds
\begin{equation}  \label{eq:WFtoWG}
	\Wronsk(g_1,\ldots,g_n) = \Wronsk(f_1,\ldots,f_n) \cdot \det(A).
\end{equation}                                                  
\end{lem}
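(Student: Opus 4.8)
The statement has two parts: the existence of a change-of-basis matrix $A$ producing series of pairwise distinct orders, which carries all the content, and the Wronskian identity~\eqref{eq:WFtoWG}, which is a formal consequence. I would dispatch the identity first, since it is immediate. Differentiation is $\K$-linear and the entries of $A$ are constants, so from $g_j = \sum_i A_{ij} f_i$ one gets $g_j^{(k)} = \sum_i A_{ij} f_i^{(k)}$ for every $k$. Hence the Wronskian matrix of the $g$'s equals the Wronskian matrix of the $f$'s multiplied on the right by $A$, and taking determinants yields~\eqref{eq:WFtoWG}. This part needs no assumption on the orders.

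For the first part, my plan is to work with coefficient sequences. Writing $f_i = \sum_{k\ge 0} c_{i,k}\,x^k$, I assemble the coefficient matrix $C=(c_{i,k})$, with rows indexed by $i$ and the (infinitely many) columns indexed by $k\ge 0$. Linear independence of $f_1,\ldots,f_n$ over $\K$ says precisely that the $n$ rows of $C$ are $\K$-linearly independent, i.e. $C$ has full row rank $n$. I would then put $C$ in row-echelon form by Gaussian elimination: scanning columns left to right, one finds successive pivots in columns $d_1 < d_2 < \cdots < d_n$, and since all $n$ rows serve as pivot rows the process stops after exactly $n$ pivot steps. The resulting rows are nonzero series whose leading positions are the distinct columns $d_i$, so their orders are mutually distinct. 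The row operations are recorded by an invertible matrix $B$ over $\K$ with $[g_1 \cdots g_n] = [f_1 \cdots f_n]\,B^{\mathsf T}$, so that $A := B^{\mathsf T}$ is the desired matrix, and the construction is effective.

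The one point that deserves care, and which I expect to be the main obstacle, is justifying that the elimination really yields $n$ distinct orders and terminates, rather than merely showing that the orders can be pushed up indefinitely. I would isolate the fact that the order map, sending a nonzero element of $V := \operatorname{span}_\K(f_1,\ldots,f_n)$ to its order, takes exactly $n$ distinct values. One direction is the key estimate: elements of $V$ with distinct orders are automatically $\K$-linearly independent, since in any vanishing combination the term of least order with nonzero coefficient cannot be cancelled; hence at most $n$ orders occur. Conversely, if fewer than $n$ orders occurred, then repeatedly subtracting leading terms would express every element of $V$ through one representative per order, forcing $\dim V < n$, a contradiction. This finiteness is exactly what guarantees that Gaussian elimination on the matrix $C$ halts, the pivot columns being these $n$ order values.
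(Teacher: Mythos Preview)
Your proof is correct and follows essentially the same approach as the paper: Gaussian elimination on the coefficient matrix (you do row operations on the $n\times\infty$ matrix, the paper does column operations on its transpose), together with the observation that differentiating \eqref{eq:FtoG} termwise yields \eqref{eq:WFtoWG}. Your additional argument that the order map on $V\setminus\{0\}$ takes exactly $n$ values is a clean way to certify termination, and is more explicit than the paper's treatment, but the underlying idea is the same.
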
	                                       

\begin{proof}
    If two series $f_1$ and $f_2$ are linearly independent, then,                                             
up to reindexing,
an appropriate linear combination of $f_1$ and $f_2$ yields a nonzero series
$\tilde{f_2}$ with order 
strictly greater than 
the order of $f_1$. 
Using this idea repeatedly
proves the  existence of the matrix $A$.
The whole procedure can be interpreted as 
Gaussian elimination by
elementary column operations, which 
computes the column echelon form of the (full rank)
matrix with $n$ columns and an infinite number of rows whose columns contain the
coefficients of the power series $f_1,\ldots,f_n$. The matrix $A$ in
equation~\eqref{eq:FtoG} is then equal to a product of elementary matrices, and it is thus invertible. 
By successive differentiations, equation~\eqref{eq:FtoG} implies 
\[\begin{bmatrix} g_1^{(i)}  & \cdots & g_n^{(i)} \end{bmatrix}
= 	 
\begin{bmatrix} f_1^{(i)}  & \cdots & f_n^{(i)} \end{bmatrix} \cdot A \qquad \text{for all} \quad i\geq 1,\] 
from which equation~\eqref{eq:WFtoWG} follows straightforwardly.	
\end{proof}

\subsection*{{}From power series with distinct orders to monomials}
For a nonzero power series $f$ in $\K[[x]]$, we denote by $\LM(f)$ the \emph{leading monomial\/} of $f$, that is, the monomial of the smallest order among the terms of $f$: \[f = \LM(f) + \, (\text{terms of higher order}).\]

\begin{lem} \label{lem:LMofW}
Let $\K$ be a field of characteristic zero.
If the  
nonzero series $g_1,\ldots,g_n$ in $\K[[x]]$ have mutually distinct orders, then their Wronskian $\Wronsk(g_1,\ldots,g_n)$ is nonzero.
\end{lem}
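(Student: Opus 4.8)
The plan is to prove the stronger statement that the leading monomial of the Wronskian equals the Wronskian of the leading monomials, namely $\LM(\Wronsk(g_1,\ldots,g_n)) = \Wronsk(\LM(g_1),\ldots,\LM(g_n))$, and then to invoke Lemma~\ref{lem:WofMonomials} to see that the right-hand side is a nonzero monomial. After reindexing, I would assume the orders satisfy $d_1 < \cdots < d_n$, where $d_i$ denotes the order of $g_i$, and write $\LM(g_i) = a_i x^{d_i}$ with $a_i \neq 0$, so that $g_i = \LM(g_i) + h_i$ where each $h_i$ is either zero or a series of order strictly greater than $d_i$.

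The first key step is an order estimate valid in any characteristic: for nonzero series $\phi_1,\ldots,\phi_n$ of respective orders $e_1,\ldots,e_n$, the Wronskian $\Wronsk(\phi_1,\ldots,\phi_n)$ has order at least $e_1 + \cdots + e_n - {n \choose 2}$. This follows by expanding the determinant over permutations $\sigma \in \mfS_n$ and noting that differentiating a series $k$ times lowers its order by at most $k$, so that each summand $\prod_{i=1}^n \phi_{\sigma(i)}^{(i-1)}$ has order at least $\sum_{i=1}^n \bigl(e_{\sigma(i)} - (i-1)\bigr) = \sum_i e_i - {n \choose 2}$.

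The second step exploits the fact that the Wronskian is multilinear in its arguments, viewed as the columns $(g_i, g_i', \ldots, g_i^{(n-1)})$. Substituting $g_i = \LM(g_i) + h_i$ and expanding by multilinearity, I obtain $\Wronsk(\LM(g_1),\ldots,\LM(g_n))$ plus a sum of terms in each of which at least one argument $\LM(g_i)$ has been replaced by $h_i$. By Lemma~\ref{lem:WofMonomials}, the main term equals $\Vand(d_1,\ldots,d_n)\, x^{d_1 + \cdots + d_n - {n \choose 2}}\, \prod_i a_i$; since the integers $d_i$ are distinct and $\K$ has characteristic zero, the Vandermonde determinant $\prod_{i<j}(d_j - d_i)$ is a nonzero integer and hence nonzero in $\K$, so this term is a nonzero monomial of order $d_1 + \cdots + d_n - {n \choose 2}$. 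By the order estimate of the previous step, every remaining term has order strictly greater than $d_1 + \cdots + d_n - {n \choose 2}$, because replacing some $\LM(g_i)$ by $h_i$ strictly raises the corresponding order. Consequently, the lowest-order monomial of $\Wronsk(g_1,\ldots,g_n)$ cannot be cancelled, and the Wronskian is nonzero.

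The main obstacle is to make precise that no cancellation occurs at the bottom order; this rests entirely on combining the order estimate with multilinearity, and it is exactly here that the hypothesis of characteristic zero is indispensable, since it guarantees that the distinct exponents $d_i$ yield a nonvanishing Vandermonde determinant in $\K$ — a conclusion that fails in characteristic $p$, as the example of $1$ and $x^p$ (with orders $0$ and $p$) illustrates.
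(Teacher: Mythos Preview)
Your proof is correct and follows essentially the same approach as the paper: both identify the lowest-order term of $\Wronsk(g_1,\ldots,g_n)$ as $\Wronsk(\LM(g_1),\ldots,\LM(g_n))$, and invoke Lemma~\ref{lem:WofMonomials} together with the characteristic-zero hypothesis to conclude that this term is nonzero. The only organizational difference is that the paper argues entry-wise---factoring each $(i,j)$ entry as $(d_j)_{i-1}\,(1+x\,r_{i,j})$ and reducing modulo~$x$---whereas you expand column-wise by multilinearity and control the remaining $2^n-1$ terms with an order estimate; the underlying idea is the same.
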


\begin{proof}
If the $g_i$'s are all monomials, the result is a direct consequence of Lemma~\ref{lem:WofMonomials}.
Indeed, the Vandermonde determinant $\Vand(d_1,\ldots,d_n)$ is nonzero if and only if the $d_i$'s are mutually distinct.
In the general case, let $\LM(g_j) = a_j x^{d_j}$ be the leading monomial of~ $g_j$. Then the $(i,j)$ entry of the Wronskian matrix, which was $w_{i,j} = a_j (d_j)_{i-1} x ^{d_j - i + 1}$ in Lemma~\ref{lem:WofMonomials}, now becomes $w_{i,j} \times (1 + x \,r_{i,j})$ for some power series $r_{i,j}$ in $\K[[x]]$.
The matrix~$D$ in the proof of Lemma~\ref{lem:WofMonomials} is replaced by a matrix whose $(i,j)$ entry is
\[ (d_j)_{i-1} \times [ 1 + x \, r_{i,j}]. \]
The determinant of this new matrix~$D$ is nonzero, since it is nonzero  modulo~$x$.  \end{proof}

\subsection*{Proof of Theorem~\ref{th:main2}}   
Let $f_1,\ldots,f_n$ be linearly independent power series in $\K[[x]]$. According to~Lemma~\ref{lem:FtoG}, there exist power series $g_1,\ldots, g_n$ 
with  mutually distinct orders such that the Wronskians 
$\Wronsk(f_1,\ldots,f_n)$ and $\Wronsk(g_1,\ldots,g_n)$ are equal up to a nonzero multiplicative factor in $\K$.
By Lemma~\ref{lem:LMofW}, the Wronskian $\Wronsk(g_1,\ldots,g_n)$ is nonzero; 
therefore the Wronskian $\Wronsk(f_1,\ldots,f_n)$ is nonzero as well.
      
If now the $f_i$'s are linearly independent rational functions in $\K(x)$, then we can view them as Laurent series, and apply (a slight extension of) the preceding result for power series. Alternatively, one could perform a translation of the variable which ensures that the origin is not a pole of any of the $f_i$'s, and then appeal to the result in $\K[[x]]$.       
In both cases, 
$\Wronsk(f_1,\ldots,f_n)$ is nonzero.
$\hfill \square$

\subsection*{Generalized Wronskians} \label{sec:generalcase}    
The concept of \emph{generalized Wronskians\/} was introduced by 
Ostrowski~\cite{Ostrowski1919} and used by Dyson~\cite{Dyson47} and 
Roth~\cite{Roth55} in the context of the Thue-Siegel-Roth theorem on  
irrationality measures for algebraic numbers.

Let $\Delta_0,\ldots, \Delta_{n-1}$ be differential operators of the form 
(using the notation of    
~\cite[Chap.~5,~\S9]{Schmidt1980},~\cite[Chap.~6,~\S5]{Cassels1957}, \cite[\S4--3]{LeVeque1956},~\cite[\S{D.6}]{HiSi00},~\cite[Chap.~5,~\S3]{Mahler1961}, and~\cite[\S6.4]{MiTa06})
\begin{equation} \label{eq:deltas}
	\Delta_s = \left(\frac{\partial}{\partial x_1} \right)^{j_1} \cdots \left(\frac{\partial}{\partial x_m} \right)^{j_m} \quad \text{with} \quad j_1 + \cdots + j_m \leq s.
\end{equation}	
The generalized Wronskian associated to $\Delta_0,\ldots, \Delta_{n-1}$ of a family $f_1,\ldots,f_{n}$ of power series in $\K[[x_1,\ldots,x_m]]$ is defined as the determinant of the matrix
\[
\begin{bmatrix}
	\Delta_0 (f_1)  & \cdots & \Delta_0 (f_{n}) \\
	\Delta_1 (f_1)  & \cdots & \Delta_1 (f_{n}) \\
	\vdots & \vdots & \vdots \\
	\Delta_{n-1} (f_1)  & \cdots & \Delta_{n-1} (f_{n}) \\
\end{bmatrix}.		
\]       
Obviously, there are finitely many generalized Wronskians constructed in this way.  
Using the same ideas as above, one can prove the following 
generalization of Theorem~\ref{th:main2}:
\begin{thm}  \label{th:main3}
If $\K$ has characteristic zero and if the power series $f_1,\ldots,f_{n}$ in   $\K[[x_1,\ldots,x_m]]$ are linearly independent over $\K$, then at least one of the generalized Wronskians of $f_1,\ldots,f_{n}$ is not identically zero.
\end{thm}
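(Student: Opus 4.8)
The plan is to follow the same three-step strategy as for Theorem~\ref{th:main2}, with the role of the order of a series played by its \emph{leading monomial} with respect to a fixed monomial order on $\K[[x_1,\ldots,x_m]]$ that well-orders the monomials (for instance the graded lexicographic order). For a nonzero series $f$ write $\LM(f)=a\,x^{\bm\alpha}$, $\bm\alpha\in\bN^m$. First I would establish the exact analog of Lemma~\ref{lem:FtoG}: since $f_1,\ldots,f_n$ are linearly independent over $\K$, Gaussian elimination on their coefficient vectors, now indexed by the monomials in increasing order, produces an invertible matrix $A$ over $\K$ such that the series $g_1,\ldots,g_n$ with $[g_1\ \cdots\ g_n]=[f_1\ \cdots\ f_n]\,A$ are nonzero and have \emph{pairwise distinct} leading monomials $a_k\,x^{\bm\alpha_k}$. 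As each $\Delta_s$ is $\K$-linear, every generalized Wronskian of the $f_j$ equals the corresponding one of the $g_j$ times $\det(A)\neq0$; so it suffices to exhibit one nonzero generalized Wronskian for a family with pairwise distinct leading exponents $\bm\alpha_1,\ldots,\bm\alpha_n$.

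Next comes the monomial model, playing the combined role of Lemmas~\ref{lem:WofMonomials} and~\ref{lem:LMofW}. The point is that each operator $x^{\bm j}\,\partial^{\bm j}$ (with $\partial^{\bm j}=\partial_{x_1}^{j_1}\cdots\partial_{x_m}^{j_m}$) acts \emph{diagonally} on monomials, sending $x^{\bm\beta}$ to $\bigl(\prod_{t}(\beta_t)_{j_t}\bigr)x^{\bm\beta}$. Writing $W_{s,k}=x^{\bm j^{(s)}}\Delta_s(g_k)$, this shows that the monomials occurring in $W_{s,k}$ are exactly those of $g_k$, so all of them are $\geq x^{\bm\alpha_k}$ in the chosen order, with the coefficient of $x^{\bm\alpha_k}$ equal to $a_k\,c_{s,k}$, where $c_{s,k}=\prod_{t=1}^m(\alpha_{k,t})_{j_t^{(s)}}$. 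Since the order is multiplicative, every term in the Leibniz expansion of $\det(W_{s,k})$ is $\geq x^{\bm\alpha_1+\cdots+\bm\alpha_n}$, and the coefficient of this minimal monomial is $\bigl(\prod_k a_k\bigr)\det(c_{s,k})$. As $\det(W_{s,k})=\bigl(\prod_s x^{\bm j^{(s)}}\bigr)\cdot\Wronsk(g_1,\ldots,g_n)$ for these operators, the generalized Wronskian of the $g_k$ is nonzero as soon as $\det(c_{s,k})\neq0$.

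Everything thus reduces to the combinatorial heart of the matter, which I expect to be the main obstacle and which replaces the nonvanishing of the Vandermonde determinant: \emph{given distinct $\bm\alpha_1,\ldots,\bm\alpha_n\in\bN^m$, there exist operators $\Delta_0,\ldots,\Delta_{n-1}$ of the form~\eqref{eq:deltas}, i.e.\ multi-indices $\bm j^{(s)}$ with $|\bm j^{(s)}|\leq s$, for which $\det(c_{s,k})\neq0$.} I would prove this by a dimension count. For a multi-index $\bm j$ set $v_{\bm j}=\bigl(\prod_t(\alpha_{k,t})_{j_t}\bigr)_{k=1}^n\in\K^n$ and $V_d=\operatorname{span}\{v_{\bm j}:|\bm j|\leq d\}$. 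Because the falling factorials form a triangular basis of the polynomials in one variable, $V_d$ equals the span of the ordinary evaluation vectors $(\bm\alpha_k^{\bm j})_k$ with $|\bm j|\leq d$, that is, the image in $\K^n$ of all polynomials of total degree $\leq d$ evaluated at the $n$ distinct points $\bm\alpha_k$. The crucial estimate is $\dim V_d\geq\min(d+1,n)$: one has $\dim V_0=1$, and whenever $\dim V_d<n$ the inclusion $V_d\subseteq V_{d+1}$ is strict, for otherwise $V_d$ would contain the constant vector and be stable under multiplication by each coordinate function (which sends $(\bm\alpha_k^{\bm j})_k$ to $(\bm\alpha_k^{\bm j+e_t})_k\in V_{d+1}=V_d$), hence be a unital subalgebra containing $x_1,\ldots,x_m$; since polynomials separate the distinct integer points over the infinite field $\K$, that subalgebra is all of $\K^n$, contradicting $\dim V_d<n$. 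Granting $\dim V_s\geq s+1$ for $0\leq s\leq n-1$, I select $\bm j^{(0)},\ldots,\bm j^{(n-1)}$ greedily: at step $s$ the $s$ previously chosen vectors all lie in $V_s$, whose dimension exceeds $s$, so some $v_{\bm j^{(s)}}$ with $|\bm j^{(s)}|\leq s$ is independent from them. The resulting operators have the required form and make $\det(c_{s,k})\neq0$, which finishes the proof. Here the hypothesis that $\K$ has characteristic zero enters twice — to keep the integer points $\bm\alpha_k$ distinct in $\K^m$ and to ensure $\K$ is infinite — in accordance with the failure of the statement in positive characteristic.
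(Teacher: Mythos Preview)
Your proof is correct. The first two reductions---Gaussian elimination to obtain series $g_k$ with pairwise distinct leading exponents, and then passage to the leading monomials---are the same as in the paper; your device of premultiplying row $s$ by $x^{\bm j^{(s)}}$ so that $x^{\bm j}\partial^{\bm j}$ acts diagonally is a tidy repackaging of the paper's multilinearity argument. Where you genuinely diverge is in the combinatorial core, namely showing that for distinct $\bm\alpha_1,\ldots,\bm\alpha_n\in\bN^m$ one can choose $\bm j^{(s)}$ with $|\bm j^{(s)}|\leq s$ so that $\det\bigl(c_{s,k}\bigr)\neq 0$. The paper proceeds by contradiction: it introduces auxiliary indeterminates $u_1,\ldots,u_m$, forms the ordinary Vandermonde determinant $\varphi$ of the linear forms $\sum_t u_t\alpha_{k,t}$, and observes that the coefficients of $\varphi$ as a polynomial in the $u_t$ are $\K$-linear combinations of exactly these determinants; if they all vanished, $\varphi$ would vanish identically, forcing two of the $\bm\alpha_k$ to coincide. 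Your route is instead a direct dimension count: the evaluation spaces $V_d$ must strictly increase until they fill $\K^n$, since a stationary $V_d=V_{d+1}$ is a $\K[x_1,\ldots,x_m]$-submodule containing $(1,\ldots,1)$ and hence all of $\K^n$; a greedy extraction then exhibits the operators. The paper's argument is shorter and stays within the Vandermonde theme of the article, whereas yours is constructive---it actually produces a specific tuple $(\Delta_0,\ldots,\Delta_{n-1})$---and makes the role of characteristic zero explicit through the distinctness of the integer points $\bm\alpha_k$ in $\K^m$.
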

                 
Two kinds of proofs of this result were previously available:  
one by substitution and reduction to the univariate case, 
in~\cite[Theorem~4.7(b)]{LeVeque1956}, 
\cite[Chap.~5, \S5]{Mahler1961}, 
and \cite[Lemma~6.4.6]{MiTa06},         
the other by induction,  
in 
\cite[Chap.~6, Lemma~6]{Cassels1957},  
\cite[Chap.~5, Lemma~9A]{Schmidt1980}, 
and~\cite[Lemma D.6.1]{HiSi00}.    
To the best of our knowledge, the following proof is new. It essentially reduces the study of Theorem~\ref{th:main3} to the particular case when all the $f_i$'s are monomials, and then concludes by using an``effective'' argument in that case.

\begin{proof}  
We will mimic the proof given above for the univariate case. 
Much as in that case (Lemma~\ref{lem:FtoG}), the linear independence of the power series $f_1,\ldots,f_{n}$ implies the existence of an invertible 
matrix~$A$ as in Lemma~\ref{lem:FtoG}, yielding series $g_1,\ldots,g_{n}$ whose leading monomials have mutually distinct exponents.  Here, by \emph{exponent\/} of a nonzero monomial $c \cdot {x_1}^{\alpha_{1}} \cdots {x_m}^{\alpha_{m}}$ ($c\in \K$) we mean the multi-index $(\alpha_{1},\ldots,\alpha_{m})$ in $\mathbb{N}^m$, and by
\emph{leading monomial\/} of a power series $f$ in $\K[[x_1,\ldots,x_m]]$, we mean the minimal nonzero monomial of $f$ with respect to the \emph{lexicographic\/} order on the exponents of monomials from $\K[x_1,\ldots,x_m]$.    

The leading monomial of a generalized Wronskian $W$ of $g_1,\ldots,g_n$ is equal to the corresponding generalized Wronskian $W_0$ of their leading monomials, provided that $W_0$ is nonzero. 
Indeed, by the multilinearity of the determinant, $W$ can be written as the sum of $W_0$ and $2^n-1$ generalized Wronskians related to the same differential operators. 
The lexicographic order being compatible with the partial derivatives and the product of monomials, $W_0$ is smaller than all the monomials occuring in the other $2^n-1$ generalized Wronskians.

We can therefore assume from now on that the $g_i$'s are all nonzero monomials:
\[g_i = c_i \cdot
{\bm x}^{{\bm \alpha}_i} = 
c_i \cdot {x_1}^{\alpha_{i,1}} \cdots {x_m}^{\alpha_{i,m}}, \quad \text{for} \quad 1\leq i \leq n, \]
with mutually distinct exponents ${\bm \alpha}_i = (\alpha_{i,1},\ldots,\alpha_{i,m})$.
The generalized Wronskian of $g_1,\ldots,g_{n}$, associated to $\Delta_0,\ldots, \Delta_{n-1}$ of the form~\eqref{eq:deltas},  is then equal to a nonzero monomial times the determinant of the matrix
\begin{equation}\label{eq:Wr-deltas}
\begin{bmatrix}
	1  & \cdots & 1 \\
	\delta_1 ({\bm \alpha}_1)  & \cdots & \delta_1 ({\bm \alpha}_n) \\
	\vdots & \vdots & \vdots \\
	\delta_{n-1} ({\bm \alpha}_1)  & \cdots & \delta_{n-1} ({\bm \alpha}_n) \\
\end{bmatrix},		
\end{equation}            
where, for ${\bm \alpha} = (\alpha_1,\ldots,\alpha_m)$ and ${\bm j} = (j_1,\ldots,j_m)$, we set
\[\delta_s({\bm \alpha}) = ({\bm \alpha})_{\bm j} = (\alpha_1)_{j_1} \cdots (\alpha_m)_{j_m}, \quad \text{with} \quad j_1 + \cdots + j_m \leq s.\]

Let us suppose by contradiction that all these generalized Wronskians are zero. Consider the Vandermonde determinant $\varphi$ in $\K[(u_i), (\alpha_{i,j})]$ of the family
\[u_1 \alpha_{1,1} + \cdots + u_m \alpha_{1,m} \qquad \ldots \qquad
u_1 \alpha_{n,1} + \cdots + u_m \alpha_{n,m},\]
where the $u_i$'s are new indeterminates.
Then $\varphi$ is seen to be a homogeneous polynomial 
in $u_1,\ldots,u_m$, whose coefficients are $\K$-linear combinations of generalized Vandermonde determinants of the form
\[
\begin{bmatrix}
	1  & \cdots & 1 \\
	{{\bm \alpha}_1}^{{\bm j}_1}  & \cdots & {{\bm \alpha}_n}^{{\bm j}_1} \\
	\vdots & \vdots & \vdots \\
	{{\bm \alpha}_1}^{{\bm j}_{n-1}}  & \cdots & {{\bm \alpha}_n}^{{\bm j}_{n-1}} \\
\end{bmatrix}, 
\quad \text{with} \quad |{\bm j}_s| \leq s \quad \text{for} \quad 1\leq s \leq n-1.       	
\]            
Here, for ${\bm \alpha} = (\alpha_1,\ldots,\alpha_m)$ and ${\bm j} = (j_1,\ldots,j_m)$, we use the classical notations $|{\bm j}| = j_1 + \cdots + j_m$ and
${\bm \alpha}^{\bm j} = {\alpha_1}^{j_1} \cdots {\alpha_m}^{j_m}.$
     
Each of these generalized Vandermonde determinants, and thus also $\varphi$ itself, is a $\K$-linear combination of determinants of the form~\eqref{eq:Wr-deltas}. By assumption, this yields $\varphi = 0$, which in turn implies, by the classical theorem on Vandermonde determinants, that there exist $i \neq j$ such that 
\[u_1 \alpha_{i,1} + \cdots + u_m \alpha_{i,m} =  
u_1 \alpha_{j,1} + \cdots + u_m \alpha_{j,m}.\]
Hence ${\bm \alpha}_i = {\bm \alpha}_j$, and this contradicts the hypothesis that the exponents ${\bm \alpha}_i$ are mutually distinct.  \end{proof}

\smallskip\noindent{\bf Acknowledgments.} We thank the three referees 
for their useful remarks.
This work has been supported in part by the Microsoft Research--INRIA Joint Centre.              

\bibliographystyle{monthly}

\providecommand{\bysame}{\leavevmode\hbox to3em{\hrulefill}\thinspace}
\providecommand{\MR}{\relax\ifhmode\unskip\space\fi MR }
\providecommand{\MRhref}[2]{%
  \href{http://www.ams.org/mathscinet-getitem?mr=#1}{#2}
}
\providecommand{\href}[2]{#2}

\end{document}